\newtheorem{thm}{Theorem}[section]%[chapter]
\newtheorem{lemma}[thm]{Lemma}
\newtheorem{prop}[thm]{Proposition}
\newtheorem{clm}[thm]{Claim}
\newcommand\ex{\ensuremath{\mathrm{ex}}}
\newcommand\cD{{\mathcal D}}
\newcommand\cH{{\mathcal H}}
\newcommand\cN{{\mathcal N}}
\newcommand{\ignore}[1]{}
\title{On the extremal graphs in generalized Turán problems}
\author{Dániel Gerbner}
\date{\small Alfr\'ed R\'enyi Institute of Mathematics\\
\small \texttt{gerbner.daniel@renyi.hu}}
\begin{document}

\maketitle

\begin{abstract}
Given two graphs $H$ and $F$, the generalized Tur\'an number $\ex(n,H,F)$ is the largest number of copies of $H$ in an $n$-vertex $F$-free graph. For every $F$ and sufficiently large $n$, we present an extremal graph for a generalized Tur\'an problem, i.e., an $F$-free $n$ vertex graph $G$ that for some $H$ contains exactly $\ex(n,H,F)$ copies of $H$.
\end{abstract}

\section{Introduction}

In ordinary Tur\'an problems, we are interested in $\ex(n,F)$, which is the largest number of edges in $n$-vertex graphs that do not contain $F$ as a subgraph. Tur\'an's theorem \cite{T} states that $\ex(n,K_{r+1})=|E(T(n,r))|$, where $T(n,r)$ is the $r$-partite Tur\'an graph, which is the complete $r$-partite graph with each part having order either $\lfloor n/r\rfloor$ or $\lceil n/r\rceil$. The Erd\H os-Stone-Simonovits theorem \cite{ES1966,ES1946} states that for any graph $F$ with $\chi(F)=r+1$ we have $\ex(n,F)=|E(T(n,r))|+o(n^2)$. This gives the asymptotics of $\ex(n,F)$ if $r>1$, but only gives the bound $o(n^2)$ for bipartite $F$. For many bipartite graphs $F$, we do not even know the order of magnitude of $\ex(n,F)$, see \cite{fursim} for a survey.

In generalized Tur\'an problems, we are interested in $\ex(n,H,F)$, which is the largest number of copies of $H$ in $n$-vertex graphs that do not contain $F$ as a subgraph. More formally, let $\cN(H,G)$ denote the number of copies of $H$ in $G$. We let $\ex(n,H,F)=\max\{\cN(H,G): \, \text{$G$ is an $n$-vertex $F$-free graph}\}$. After several sporadic results, the systematic study of generalized Tur\'an problems was initiated by Alon and Shikhelman \cite{ALS2016}.

Given $H$ and $F$, the \textit{extremal graphs} are $n$-vertex $F$-free graphs with $\ex(n,H,F)$ copies of $H$. In this paper, for every graph $F$ with more than one edge, we show a graph $H$ and an extremal $n$-vertex graph $G$ for $\ex(n,H,F)$ if $n$ is sufficiently large. More precisely, we show infinitely many graphs $H$ that do not contain $F$ and for each such $H$, for sufficiently large $n$ we give an $n$-vertex $F$-free graph $G$ with $\ex(n,H,F)=\cN(H,G)$.

%We construct for every $F$ and large enough $n$ a graph $G$ such that for some $H$ (in fact, infinitely many $H$) we have $\ex(n,H,F)=\cN(H,G)$.

Note that in the more precise formulation we avoided the trivial example: in the case $H$ contains $F$, all the $F$-free graphs are extremal.

Let $\chi(F)=r+1$. We denote by $\sigma(F)$ the smallest color class that can appear in an $(r+1)$-coloring of $F$. Given graphs $G$ and $G'$, we denote by $G+G'$ the graph obtained by taking vertex-disjoint copies of $G$ and $G'$ and adding the edges $uv$ for each $u\in V(G), v\in V(G')$. The following theorem constructs an extremal graph for non-bipartite $F$.

\begin{thm}\label{main}
If $F$ has chromatic number $r+1>2$ and $\sigma(F)=s$, $H$ is the complete $r$-partite graph $K_{a,\dots,a}$ with $a$ sufficiently large, then for sufficiently large $n$ we have $\ex(n,H,F)=\cN(H,K_{s-1}+T(n-s+1,r))$.
\end{thm}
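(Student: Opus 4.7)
The lower bound is classical. The graph $G^* := K_{s-1}+T(n-s+1,r)$ is $F$-free: in any putative embedding of $F$, each color class must map to an independent set of $G^*$, and since the $K_{s-1}$ part is a clique and each Turán part is independent, the only independent sets in $G^*$ are subsets of a single Turán part or singletons from the apex; a short pigeonhole argument using $\chi(F)=r+1$ and $\sigma(F)=s$ produces a contradiction. Hence $\ex(n,H,F)\ge\cN(H,G^*)$.

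For the upper bound, let $G$ be any $n$-vertex $F$-free graph satisfying $\cN(H,G)\ge\cN(H,G^*)$. Since $\cN(H,G^*)=(1+o(1))\cN(H,T(n,r))$, and a standard supersaturation argument shows that graphs of density much below $1-1/r$ contain at most $o(\cN(H,T(n,r)))$ copies of $H$, we conclude $e(G)\ge(1-1/r-o(1))\binom{n}{2}$. The Erd\H{o}s--Simonovits stability theorem then provides a partition $V(G)=V_1\cup\cdots\cup V_r$ with $|V_i|=n/r+o(n)$ and with $o(n^2)$ edges both inside parts and missing between parts.

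Let $A$ be the set of vertices with at most $\beta n$ non-neighbors, where $\beta=\beta(F)$ is small. The crucial claim is $|A|\le s-1$; otherwise $A$ contains $s$ pairwise-adjacent vertices whose common neighborhood contains a subgraph close to $T(m,r)$ for some $m=\Theta(n)$, so $G$ contains a subgraph of the form $K_s+T(m,r)$, and Simonovits' extremal theorem guarantees $K_s+T(m,r)\supseteq F$ once $m$ is large enough, contradicting $F$-freeness. A complementary clean-up shows that $V(G)\setminus A$ induces a graph close to $T(n-|A|,r)$ and that the vertices in $A$ have essentially no non-neighbors at all.

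To conclude, I compare $\cN(H,G)$ to $\cN(H,G^*)$ term by term, stratified by the number $k$ of apex vertices used in each copy of $H$. The $k=0$ count factors as a product $\prod_i\binom{|V_i|}{a}$ which, by the concavity of $\log\binom{x}{a}$, is maximized over $r$-partitions of a fixed vertex set exactly by the balanced Turán partition together with all cross edges present; each internal edge or missing cross edge costs a number of copies on the order of $n^{ra-2}$. The contribution from $k\ge 1$ is monotone in $|A|$, hence maximized by $|A|=s-1$, and each missing apex-to-Turán edge costs order $n^{ra-1}$ copies. Combining the bound $|A|\le s-1$ with these strict optimizations forces $G=G^*$. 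The main obstacle is precisely this final comparison: the near-apex and near-Turán regimes must be handled simultaneously with error terms sharp enough to rule out any profitable deviation, which is where the hypothesis that $a$ is sufficiently large enters, since the apex contribution scales with $a$ and must dominate all lower-order corrections coming from imperfections in the Turán substructure.
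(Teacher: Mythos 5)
There is a genuine gap, and it sits at the heart of your final comparison step. You assert that the $k=0$ count factors as $\prod_i\binom{|V_i|}{a}$ and that ``each internal edge or missing cross edge costs a number of copies on the order of $n^{ra-2}$,'' so that every deviation from $K_{s-1}+T(n-s+1,r)$ is strictly unprofitable. This has the sign backwards: edges inside the parts \emph{gain} copies of $H$ rather than losing them. Indeed, the extremal graph itself satisfies $\cN(H,K_{s-1}+T(n-s+1,r))=\cN(H,T(n,r))+\Theta(n^{ra-1})$, and the entire surplus comes from copies of $H$ that meet a part in a star $K_{1,a}$ centered at an apex vertex --- exactly the ``internal edges'' your framework declares costly. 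Consequently the real content of the upper bound is not that deviations are expensive, but that among all $F$-free ways of placing edges inside the parts, concentrating them at $s-1$ universal vertices is the unique optimum. The paper achieves this by classifying the copies of $H$ according to how they intersect the parts (complete multipartite traces $K_{p,q}$, stars with centers in or out of the exceptional set $B$, etc.), invoking the exact result of Gerbner--Patk\'os for $\ex(n,K_{1,a},K_{s,t})$ \emph{together with its stability clause} (any $K_{s,t}$-free graph on $A_i\cup B$ not containing $\overline{K}_{s-1,\cdot}$ loses $\Omega(n^{a-1})$ stars), and a counting lemma showing that stars centered at low-degree vertices are negligible. None of these mechanisms, nor any substitute for them, appears in your proposal; ``the contribution from $k\ge 1$ is monotone in $|A|$'' is precisely the statement that needs proof. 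A second, smaller omission: in the balancing step you only optimize the $k=0$ term, but moving vertices between parts also changes the (dominant, order $n^{ra-1}$) apex contributions, which is why the paper needs the Ma--Qiu/Brown--Sidorenko result that $T(m,2)$ maximizes copies of $K_{a-1,a}$, $K_{a-2,a}$, $K_{a-1,a-1}$ among complete bipartite graphs.

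Two further points. Your $F$-freeness argument for the lower bound is garbled as written: partitioning $V(F)$ into $\le s-1$ apex singletons plus $r$ independent sets yields an $(r+s-2)$-coloring, which does not by itself contradict $\sigma(F)=s$; the standard (and not entirely trivial) fact you need is that $F\subseteq K_{s-1}+T(m,r)$ would force a proper $(r+1)$-coloring of $F$ with a class of size at most $s-1$. And your claim that $|A|\le s-1$ requires ``$s$ pairwise-adjacent vertices'' is a red herring: to embed $F$ you map an independent color class of $F$ onto those $s$ vertices, so their mutual adjacency is irrelevant; what matters (and what the paper proves by a degree-sum computation) is that $s$ vertices cannot all have $\Omega(n)$ common neighbours in every part.
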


Note that the case $s=1$ was proved by the author \cite{ger2}.

Let us assume now that $F$ is bipartite. Let $\overline{K}_{s,t}$ denote the graph we obtain from $K_{s,t}$ by adding all the edges inside the part of order $s$, i.e., $\overline{K}_{s,t}=K_s+T(t,1)$.  If $F$ is not a star, then similar to Theorem \ref{main}, $K_{s-1}+T(n-s+1,1)$ is an extremal graph. Clearly this graph is $F$-free. On the other hand, $F$ is contained in $K_{s,t}$ for $t=|V(F)|-s$. It was shown by Gerbner and Patk\'os \cite{gepat} that $\ex(n,K_{1,q},K_{s,t})=\cN(K_{1,q},K_{s-1}+T(n-s+1,1))$ if $q\ge 2t-1$. This implies that $\ex(n,K_{1,q},F)=\cN(K_{1,q},K_{s-1}+T(n-s+1,1))$.

Finally, if $F$ is a star $K_{1,t}$, let $G$ be the vertex-disjoint union of $\lfloor n/t\rfloor$ copies of $K_t$ and a $K_{t'}$ with $t'=n-t\lfloor n/t\rfloor$. A theorem of Chase \cite{chase} shows that $\ex(n,K_k,K_{1,t})=\cN(K_k,G)$ for any $k\ge 2$, giving us an extremal graph if $t>1$. We also have a simple observation of Cambie, de Verclos and Kang \cite{cvk}, which states that for any tree $T$, $\ex(n,T,K_{1,t})=\cN(T,G)$ for every $n$-vertex graph $G$ that have girth more than $|V(T)|$ and at least $n-1$ vertices of degree $t-1$, where the last vertex has degree $t-1$ or $t-2$. This gives another extremal graph if $t>2$.

\section{Proofs}

We will use a theorem of the author \cite{gerbi}. Given graphs $H$ and $F$ with $\chi(H)<\chi(F)$, we say that $H$ is \textit{weakly $F$-Tur\'an-stable} if the following holds. Any $F$-free $n$-vertex graph $G$ with $\cN(H,G)=\ex(n,H,F)-o(n^{|V(H)|})$ can be turned into a complete $r$-partite graph by adding and removing $o(n^2)$ edges.
Results from \cite{gerb2} imply that $K_{a,\dots,a}$ is %weakly 
$F$-Tur\'an-stable. Given a graph $F$ with chromatic number $r+1$, $\cD(F)$ denotes the set of bipartite graphs that appear as two chromatic classes in an $(r+1)$-coloring of $F$.

\begin{thm}\label{thm1}
Let $r+1=\chi(F)>\chi(H)$ and assume that $H$ is weakly $F$-Tur\'an-stable. Then %$\ex(n,H,F)=\cN(H,T)+\biex(n,F)\Theta(n^{|V(H)|-2})$ for some $n$-vertex complete $r$-partite graph $T$. Moreover, 
for every $n$-vertex $F$-free graph $G$ with $\cN(H,G)=\ex(n,H,F)$ there is an $r$-partition of $V(G)$ to $A_1,\dots,A_r$, a constant $K=K(F)$ and a set $B$ of at most $rK(\sigma(F)-1)$ vertices such that each member of $\cD(F)$ inside the parts shares at least two vertices with $B$, every vertex of $B$ is adjacent to $\Omega(n)$ vertices in each part, every vertex of $A_i\setminus B$ is adjacent to $o(n)$ vertices in $A_i$ and all but $o(n)$ vertices in $A_j$ with $j\neq i$. 
\end{thm}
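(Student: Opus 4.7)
The plan is to combine the weak $F$-Tur\'an-stability hypothesis with a Zykov-style symmetrization (rebuilding) argument, and then to verify the $\cD(F)$ condition by a direct greedy extension. Since $G$ is extremal, $\cN(H,G)=\ex(n,H,F)$, so the weak $F$-Tur\'an-stability of $H$ immediately yields an $r$-partition $A_1,\dots,A_r$ of $V(G)$ such that $G$ agrees with the complete $r$-partite graph on $A_1,\dots,A_r$ except on $o(n^2)$ edges. Call a vertex $v\in A_i$ \emph{typical} if it has $o(n)$ neighbors in $A_i$ and at most $o(n)$ non-neighbors in each $A_j$ with $j\neq i$; summing the $o(n^2)$ symmetric difference over all vertices shows that all but $o(n)$ vertices of $G$ are typical.

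Next I would refine the set of atypical vertices to a constant-size set $B$. Given an atypical $v\in A_i$, consider the graph $G_v$ obtained by replacing the neighborhood of $v$ by that of a carefully chosen typical vertex $u\in A_i$. Using $\chi(H)\le r$ and that $u$ is almost completely joined to the other $r-1$ parts and has few in-part neighbors, one verifies that $u$ lies in at least as many copies of $H$ in $G$ as $v$ does, so $\cN(H,G_v)\ge \cN(H,G)$. Extremality therefore forces $G_v$ to contain a copy of $F$, and such a copy must use $v$ in some color class of an $(r+1)$-coloring of $F$. Because $v$'s new neighborhood is essentially all of $V(G)\setminus A_i$, the only way this extra $F$-copy can appear is if one color class of size at most $\sigma(F)$ of this $F$ is being realized inside $A_i$; this requires at least $\sigma(F)-1$ \emph{other} atypical vertices already present in $A_i$. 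Taking $B$ to consist of the obstructing atypical vertices required to prevent such rebuilds in each part yields $|B|\le rK(\sigma(F)-1)$ for some constant $K=K(F)$ bounded in terms of $|V(F)|$. One may additionally demand that every vertex of $B$ has $\Omega(n)$ neighbors in each part, since any vertex failing this could instead be removed or rebuilt without decreasing the $H$-count.

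For the $\cD(F)$ condition, suppose toward a contradiction that some $F'\in\cD(F)$ lies inside $A_i$ and meets $B$ in at most one vertex. Every vertex of $F'$ outside $B$ has all but $o(n)$ neighbors in each $A_j$ with $j\neq i$, and the at most one vertex of $F'$ in $B$ has $\Omega(n)$ neighbors in each $A_j$. Since $|V(F')|$ is bounded, the common neighborhood of $V(F')$ in each $A_j$ still has size $\Omega(n)$. An iterative greedy selection then produces vertices in the $r-1$ other parts forming the remaining color classes of an $(r+1)$-coloring of $F$ whose first two classes are the bipartition of $F'$, producing a copy of $F$ in $G$, a contradiction. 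Hence every $F'\in\cD(F)$ inside a part must share at least two vertices with $B$.

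The main obstacle is the second step: making the symmetrization quantitative enough to yield $|B|\le rK(\sigma(F)-1)$ rather than merely $|B|=o(n)$. The reason $\sigma(F)-1$ is precisely the right constant is delicate: it is the largest number of atypical vertices that can be retained in a single part without already staging the smallest color class of $F$, and pinning down this obstruction in terms of $\cD(F)$ and the densely connected complement-typical structure of the graph is the technical heart of the theorem.
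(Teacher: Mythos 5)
First, note that this paper does not actually prove Theorem \ref{thm1}: it is imported verbatim from \cite{gerbi}, so there is no in-paper proof to match your argument against. Judged on its own merits, your opening step (weak $F$-Tur\'an-stability gives the $r$-partition, and summing the $o(n^2)$ edit distance shows all but $o(n)$ vertices are ``typical'') and your closing step (the greedy embedding showing that a member of $\cD(F)$ inside a part meeting $B$ in at most one vertex would yield a copy of $F$, essentially Proposition \ref{trivv} of this paper) are both sound.

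The genuine gap is the middle step, and it is fatal as written. Your symmetrization claims that for an atypical $v\in A_i$ and a well-chosen typical $u\in A_i$, ``$u$ lies in at least as many copies of $H$ in $G$ as $v$ does, so $\cN(H,G_v)\ge\cN(H,G)$.'' This is false precisely for the vertices the theorem is designed to isolate: in the true extremal graph $K_{\sigma(F)-1}+T(n-\sigma(F)+1,r)$ with $H=K_{a,\dots,a}$, each dominating vertex lies in roughly $r$ times as many copies of $H$ as a typical vertex (it can join the $H$-class sitting in any of the $r$ parts), so replacing its neighborhood by a typical one loses $\Theta(n^{ra-1})$ copies. Hence extremality does \emph{not} force $G_v$ to contain $F$, and the whole mechanism by which you propose to bound $|B|$ and to enforce the $\Omega(n)$-degree condition on $B$ collapses. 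The bound $|B|\le rK(\sigma(F)-1)$ has to come from a different source: one shows that if some part contained too many vertices each with $\Omega(n)$ neighbors inside that part, then $\sigma(F)$ of them together with their large common neighborhood would host the two smallest color classes of $F$ inside one part, and the remaining classes embed greedily into the other parts, producing $F$ in $G$ itself (this is exactly the shape of the degree-sum argument in the Claim inside the proof of Theorem \ref{main}). You in fact flag this step as ``the technical heart'' without carrying it out, so even setting aside the incorrect symmetrization, the proposal is an outline rather than a proof at the one point where the theorem is nontrivial.
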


We will use some results of Gerbner and Patk\'os \cite{gepat}. 

\begin{thm}[Gerbner and Patk\'os \cite{gepat}]\label{gepa}
Let $p+1<s\le t< q$ and $n$ is large enough. 
If $p+q\ge s+t$ or $p=1$ and $q$ is large enough, then $\ex(n,K_{p,q},K_{s,t})=\cN(K_{p,q},\overline{K}_{s-1,n-s+1})$ and every extremal $n$-vertex $K_{s,t}$-free graph contains $\overline{K}_{s-1,n-s+1}$. Moreover, if an $n$-vertex $K_{s,t}$-free graph does not contain $\overline{K}_{s-1,n-s+1}$, then it has at most $ex(n,K_{p,q},K_{s,t})-\Omega(n^{p-1})$ copies of $K_{p,q}$.
%If $a+b<s+t$, then $\ex(n,K_{a,b},K_{s,t})=\cN(K_{a,b},\overline{K}_{s-1,n-s+1})+\Theta(n)$.
\end{thm}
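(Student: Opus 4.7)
The plan is to match the construction $\overline{K}_{s-1,n-s+1}$ by a codegree-based upper bound on $\cN(K_{p,q},\cdot)$ over $K_{s,t}$-free graphs, and to extract the stability ``moreover'' clause from the same accounting.

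First I would verify the construction. The graph $\overline{K}_{s-1,n-s+1}$ is a clique $U$ of size $s-1$ joined to an independent set $I$ of size $n-s+1$. It is $K_{s,t}$-free because any $s$ chosen vertices must include some $v\in I$ with $N(v)=U\setminus\{v\}$, so their common neighborhood has size at most $s-2<t$. Since $q>t\ge s$, no $K_{p,q}$ copy can place its $q$-side inside $U$; hence every $K_{p,q}$ has its $p$-side in $U$, and any $p$-subset of $U$ together with any $q$ further vertices spans a $K_{p,q}$. This gives $\cN(K_{p,q},\overline{K}_{s-1,n-s+1})=\binom{s-1}{p}\binom{n-p}{q}=\Theta(n^q)$.

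For the upper bound, let $G$ be any $n$-vertex $K_{s,t}$-free graph. The Kővári--Sós--Turán codegree condition says $|N(S)|\le t-1$ for every $s$-subset $S$. I would define the apex set $B=\{v:|N(v)|\ge n-n^{1/2}\}$; then $|B|\le s-1$, since $s$ apex vertices would share more than $t$ neighbors. Writing $\cN(K_{p,q},G)$ as a sum of $\binom{|N(A)|}{q}$ over $p$-subsets $A$, the task is to bound the contribution of $A\not\subseteq B$. When $p=1$, this follows because $\binom{d}{q}$ is sharply concentrated at the largest degrees when $q$ is large, and only the $\le s-1$ apex vertices give the main term. When $p+q\ge s+t$, any such $A$ augmented by apex vertices to an $s$-set still obeys the codegree bound, forcing a quantitative deficit in $|N(A)|$ relative to $n-p$; convexity of $\binom{\cdot}{q}$ and the count of such $A$'s then give a total of $o(n^q)$.

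Equality in the upper bound forces $|B|=s-1$, $B$ to induce a clique, and every vertex outside $B$ to be adjacent to all of $B$, so $G$ contains $\overline{K}_{s-1,n-s+1}$. For the ``moreover'' clause, if $G$ avoids $\overline{K}_{s-1,n-s+1}$, the marginal case is that $|B|=s-1$ but some $v\notin B$ misses an edge to some $u\in B$; then every $K_{p,q}$ using $u$ on the $p$-side and $v$ on the $q$-side is lost, a deficit of $\Omega(n^{q-1})$, which is $\Omega(n^{p-1})$ since $q>p$. Coarser structural failures (smaller $|B|$, $B$ not a clique) cost strictly more. The hardest step will be translating the constant codegree bound $|N(S)|\le t-1$ into a quantitative decay of $|N(A)|$ for non-apex $p$-subsets $A$, uniform enough to beat the main term $\binom{n-p}{q}$ after summing over $\Theta(n^p)$ such subsets; this is exactly where the hypothesis $p+q\ge s+t$ calibrates the exponents, with the alternative $p=1$, $q$-large regime bypassing it via concentration of $\binom{d}{q}$. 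Localising the precise $\Omega(n^{p-1})$ loss in the stability clause at a single missing edge -- rather than settling for a weaker bound -- will be the other delicate point.
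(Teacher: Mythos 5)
First, a point of reference: the paper does not prove Theorem~\ref{gepa} at all --- it is imported verbatim from Gerbner--Patk\'os \cite{gepat} and used as a black box --- so there is no in-paper proof to compare your attempt against. Judged on its own, your proposal correctly identifies the construction, verifies that $\overline{K}_{s-1,n-s+1}$ is $K_{s,t}$-free, computes $\cN(K_{p,q},\overline{K}_{s-1,n-s+1})=\binom{s-1}{p}\binom{n-p}{q}$, and sets up the right decomposition $\cN(K_{p,q},G)=\sum_{|A|=p}\binom{|N(A)|}{q}$ with an apex set $B$ of size at most $s-1$. But the upper bound, which is the entire content of the theorem, has a genuine gap.

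Two concrete problems. (i) Even if you could show that $p$-sets $A\not\subseteq B$ contribute only $o(n^q)$, that would not yield the \emph{exact} equality: the apex $p$-sets alone can already contribute up to $\binom{s-1}{p}\binom{n-p}{q}$, so any positive gain from non-apex $p$-sets must be shown to be strictly outweighed by a forced deficit $\binom{n-p}{q}-\binom{|N(A)|}{q}$ among the apex $p$-sets. That is a local exchange/compression argument, not an asymptotic error bound, and it is also where the ``moreover'' clause and the containment statement come from. (ii) The $o(n^q)$ claim itself does not follow from the codegree condition plus convexity in the way you sketch. Already for $p=1$, the K\H ov\'ari--S\'os--Tur\'an edge bound allows a $K_{s,t}$-free graph to have $\Theta(n^{1-1/s})$ vertices of degree $\Theta(n)$, and their contribution $\Theta(n^{1-1/s})\binom{cn}{q}$ swamps the main term $\binom{n-1}{q}$; so ``only the apex vertices give the main term'' is not a concentration fact about $\binom{d}{q}$ but needs a structural input. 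The missing mechanism is exactly the covering argument the paper carries out in the proof of Lemma~\ref{kiso}: order the neighborhoods of the large-degree vertices, use that every $(s-1)$-wise intersection of earlier neighborhoods meets the next one in at most $t-1$ vertices, and conclude that the vertices of degree at least $\varepsilon n$ have total degree at most $(s-1)n+o(n)$; only then does convexity force essentially $s-1$ full-degree vertices, with a quantifiable loss otherwise. You explicitly flag this as ``the hardest step'' and leave it open, so the proposal is a plausible plan rather than a proof. (Your reading of the stated deficit $\Omega(n^{p-1})$ as a weakening of the natural $\Omega(n^{q-1})$ loss from a single missing apex edge is reasonable, but that clause rests on the same missing argument.)
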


We will also need the following structural information about $K_{s,t}$-free graphs.

\begin{lemma}\label{kiso}
Let $p<s\le t$, $s<q$, $n$ large enough and $G$ be a $K_{s,t}$-free $n$-vertex graph and let $U$ be the set of vertices in $G$ with degree at most $\varepsilon n$. Then the number of copies of $K_{p,q}$ such that the smaller part contains a vertex of $U$ is at most $\varepsilon n^{p+q-1}$. 
\end{lemma}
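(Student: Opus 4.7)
The plan is to count, for each vertex $u\in U$, the copies of $K_{p,q}$ in which $u$ occupies a slot of the smaller part, and then sum over such $u$.

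Fix $u\in U$. In any such copy of $K_{p,q}$, the larger part $B$ (of size $q$) is contained in $N(u)$, and since $u\in U$ we have $|N(u)|\le\varepsilon n$. Hence there are at most $\binom{\varepsilon n}{q}\le(\varepsilon n)^q/q!$ choices for $B$. For each such $B$, the remaining $p-1$ vertices $A':=A\setminus\{u\}$ of the smaller part lie in the common neighborhood $W:=\bigcap_{b\in B}N(b)$. Because $q>s$, any $s$-subset $B_0\subseteq B$ satisfies $\bigcap_{b\in B_0}N(b)\supseteq W$, and if $|W|\ge t$ then $B_0$ together with any $t$ vertices of $W$ would form a copy of $K_{s,t}$ in $G$, contradicting $K_{s,t}$-freeness. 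Thus $|W|\le t-1$, and there are at most $\binom{t-1}{p-1}$ choices for $A'$.

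Summing over $u\in U$, and dividing by $p$ to compensate for the fact that a given copy is counted at most $p$ times (once for each low-degree vertex in its smaller part), the total number of copies of $K_{p,q}$ whose smaller part meets $U$ is at most
\[
\frac{|U|}{p}\cdot\binom{\varepsilon n}{q}\binom{t-1}{p-1}\;\le\;C\,\varepsilon^{q}\,n^{q+1},
\]
where $C=C(p,q,t)$ is a constant. Since $p\ge 2$, we have $p+q-1\ge q+1$, so for $n$ large enough (in terms of $\varepsilon,p,q,t$) the right-hand side is at most $\varepsilon n^{p+q-1}$, as claimed.

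I expect the main obstacle to be the bound $|W|\le t-1$: the naive count that only uses the low degree of $u$ to bound $B$ loses a factor of roughly $n^{p-1}$ for the choice of $A'$, yielding something like $\varepsilon^{q}n^{p+q}$, which is too weak. The key is to exploit the $K_{s,t}$-freeness a \emph{second} time, using $B$ itself (of size $q>s$) as the ``large'' side, to force $A'$ into a constant-size common neighborhood.
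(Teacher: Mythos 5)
Your argument is clean and essentially correct for $p\ge 2$, but it does not prove the lemma as stated: the hypotheses are $p<s\le t$ and $s<q$, which allow $p=1$, and your final step explicitly invokes ``Since $p\ge 2$''. This is not a boundary case you can discard --- the paper applies the lemma precisely with $p=1$ (to stars $K_{1,m-1}$ and $K_{1,a}$ inside the parts $A_i\setminus B$), and $p=1$ is where the whole difficulty lives. For $p=1$ your count gives $\sum_{u\in U}\binom{\varepsilon n}{q}\approx \varepsilon^q n^{q+1}$, which is \emph{not} $O(\varepsilon n^{q})=O(\varepsilon n^{p+q-1})$ for large $n$; no amount of enlarging $n$ fixes this, because the trivial bound $|U|\le n$ is too lossy. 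The paper's proof is built exactly to repair this: it splits $U$ into vertices of degree at most $\varepsilon' n^{(q-1)/q}$ (where the naive star count already gives $\varepsilon n^{q}/2$) and the remaining vertices, and for the latter it uses a covering argument driven by $K_{s,t}$-freeness to show there are only $O(n^{1/q})$ of them with total degree at most $sn$, after which convexity of $\binom{x}{q}$ bounds $\sum_\ell\binom{|S_\ell|}{q}$ by $\varepsilon n^q/2$; the factor $n^{p-1}$ for the rest of the small part is only appended at the very end. So your use of $K_{s,t}$-freeness (on the common neighbourhood of the large part $B$) is genuinely different from the paper's (on the neighbourhoods of the centers), and it buys a much shorter proof when $p\ge 2$, but it cannot replace the paper's argument in the case that is actually needed.

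Two smaller points. First, your bound $|W|\le t-1$ should be $|W|\le s+t-1$: the $t$ vertices of $W$ you use to build a $K_{s,t}$ must be chosen disjoint from the $s$-subset $B_0\subseteq B$, and $W$ may intersect $B_0$. This only changes the constant $\binom{t-1}{p-1}$ to $\binom{s+t-1}{p-1}$ and is harmless. Second, when $p=2$ you have $p+q-1=q+1$, so $C\varepsilon^{q}n^{q+1}\le\varepsilon n^{q+1}$ requires $\varepsilon$ small (namely $C\varepsilon^{q-1}\le 1$) rather than $n$ large; this is consistent with how the lemma is used, but your phrasing ``for $n$ large enough'' is not accurate there.
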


\begin{proof}
Consider first the vertices of degree at most $\varepsilon' n^{\frac{q-1}{q}}$ for some sufficiently small $\varepsilon'>0$. There are at most $\varepsilon n^{q-1}/2$ ways to choose $q$ neighbors of them, thus there are at most $\varepsilon n^{p+q-1}/2$ copies of $K_{p,q}$ that contains such a vertex in the smaller part.

Let $U'=\{u_1,\dots, u_{|U'|}\}$ denote the vertices in $U$ of degree more than $\varepsilon' n^{\frac{q-1}{q}}$. Consider now the copies of $K_{p,q}$ where the smaller part contains a vertex in $U'$ and call such a vertex the \textit{center} of the $K_{p,q}$. 
Let $S_j$ denote the neighborhood of $u_j$. 
We consider the sets that are the intersections of $s-1$ sets $S_\ell$ with $\ell< j$, then there are $\binom{j-1}{s-1}$ such sets. Each such set shares at most $t-1$ vertices with $S_j$ because of the $K_{s,t}$-free property. 
%hmm, itt ugye ezek maguk bekavarnak? Ugye az $s-1$ $u_i$ az nincs a metszetben, $u_j$ meg az $S_j$-ben, tehát a $t$ pont meg az $s$ az tényleg diszj
Therefore, $S_j$ creates at most $(t-1)\binom{j-1}{s-1}$ vertices that are covered at least $s$ times. This implies that after the $j$th set, at most $\sum_{\ell=s}^{j-1}(t-1)\binom{\ell-1}{s-1}\le (t-1)j^s$ vertices are covered at least $s$ times.  At $j=2s n^{\frac{1}{q}}/ \varepsilon'$, at most $c n^{\frac{s}{q}}$ vertices are covered at least $s$ times for some $c$ depending on $s$ and $\varepsilon'$. This shows that $\sum_{\ell=1}^{j} |S_\ell|\le (s-1)n+jc n^{\frac{s}{q}}$. On the other hand, $\sum_{\ell=1}^{j} |S_\ell|\ge j\varepsilon' n^{\frac{q-1}{q}}\ge 2sn$, a contradiction if $n$ is large enough.

We obtained that $|U'|<2s n^{\frac{1}{q}}/ \varepsilon'$. 
Moreover, we have that $\sum_{\ell=1}^{|U'|} |S_\ell|\le (s-1)n+jc n^{\frac{s}{q-1}}\le sn$. 
%Recall that we want an upper bound on $\sum_{\ell=1}^j \binom{|S_\ell|}{m-1}$. 
We will use this above property and that $0\le |S_\ell|\le \varepsilon n$. By the convexity of the binomial function, $\sum_{\ell=1}^j \binom{|S_\ell|}{q}$ is maximal when we have some sets $S_j$ of order $\varepsilon n$ and the others of order 0, i.e., $\sum_{\ell=1}^{|U'|} \binom{|S_\ell|}{q}\le s \binom{\varepsilon n}{q}/\varepsilon\le \varepsilon n^{q}/2$. Now we count the copies of $K_{p,q}$ that contain a vertex of $U'$. First we pick the center and $q$ of its neighbors, at most $\varepsilon n^{q}/2$ ways, then we pick the remaining vertices at most $n^{p-1}$ ways. This completes the proof.
\end{proof}

We will use the following simple observation multiple times. 
\begin{prop}\label{trivv}
Let $V(G)=A_1\cup \dots \cup A_r$ with $|A_i|\ge |V(F)|^2$. Assume that each vertex of $A_i$ is adjacent to all but at most $|A_j|/|V(F)|$ vertices in $A_j$ for each $j\neq i$. Let $F$ be an $(r+1)$-colorable graph such that there is a coloring with a color class of order $s$ and another class of order $t$. Assume that there is a $K_{s,t}$ inside $A_i$. Then there is an $F$ in $G$.
\end{prop}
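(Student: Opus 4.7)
The plan is a direct greedy embedding of $F$ into $G$. First I would fix an $(r+1)$-coloring of $F$ whose color classes $C_1,\dots,C_{r+1}$ satisfy $|C_1|=s$ and $|C_2|=t$, and assume without loss of generality that $i=1$, so the prescribed $K_{s,t}$ lies inside $A_1$. Using this $K_{s,t}$, I would embed $C_1\cup C_2$ into $A_1$, mapping $C_1$ to the $s$-side and $C_2$ to the $t$-side; since $F$ has no edges internal to $C_1$ or to $C_2$ and $K_{s,t}$ contains every edge between the two sides, all $F$-edges within $C_1\cup C_2$ are automatically realised.

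Next I would assign the remaining classes $C_3,\dots,C_{r+1}$ bijectively to the parts $A_2,\dots,A_r$ (say $C_{k+1}$ goes to $A_k$ for $k=2,\dots,r$) and embed their vertices one at a time in an arbitrary order. When embedding a vertex $v\in C_{k+1}$ into $A_k$, every $F$-neighbor of $v$ lies outside $C_{k+1}$, and if it is already placed then it lies in some part $A_{k'}$ with $k'\neq k$. By the hypothesis, each such already-placed neighbor $w$ has at most $|A_k|/|V(F)|$ non-neighbors in $A_k$; since $v$ has fewer than $|V(F)|$ neighbors in total, adjacency constraints forbid at most $(|V(F)|-1)|A_k|/|V(F)|$ vertices of $A_k$, and at most $|V(F)|-1$ further vertices of $A_k$ are ruled out because they have already been used at earlier steps. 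The number of admissible choices is therefore at least
\[
|A_k| - \frac{(|V(F)|-1)|A_k|}{|V(F)|} - (|V(F)|-1) \;=\; \frac{|A_k|}{|V(F)|} - (|V(F)|-1),
\]
which is strictly positive by the assumption $|A_k|\ge |V(F)|^2$. Picking any admissible vertex extends the partial embedding, and continuing in this fashion produces a full copy of $F$ in $G$.

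The only thing that really has to be checked is this numerical bound at each greedy step, and the hypothesis $|A_k|\ge |V(F)|^2$ is precisely calibrated for it to go through; there is no deeper obstacle, consistent with the statement being labelled as the ``trivial'' observation the author intends to reuse.
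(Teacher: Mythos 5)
Your proposal is correct and follows essentially the same route as the paper: embed the classes of sizes $s$ and $t$ into the given $K_{s,t}$ inside $A_i$, then place the remaining $r-1$ color classes greedily into the other parts, with the count $|A_k|-\frac{(|V(F)|-1)|A_k|}{|V(F)|}-(|V(F)|-1)>0$ guaranteed by $|A_k|\ge|V(F)|^2$. The only (immaterial) difference is that you embed the remaining classes vertex by vertex while the paper picks each whole class at once from the common neighborhood of the previously embedded vertices.
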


\begin{proof}
We will embed two color classes into $A_i$, and each other color class greedily into the other sets $A_j$. Each time, when we want to embed some vertices into $A_j$, the already embedded at most $|V(F)|-1$ vertices are adjacent to each but at most $(|V(F)|-1)|A_j|/|V(F)|$ vertices of $A_j$, thus there are enough vertices in their common neighborhood to pick the next color class.
\end{proof}

Now we are ready to prove Theorem \ref{main}. It will be convenient for us to use the $O$ and $o$ notation in most of the proof, but use $\varepsilon$ in some more delicate situation.

\begin{proof} Let $\delta=1/|V(F)|$. We let $a$ be large enough compared to $\delta$. 
We pick $\varepsilon>0$ small enough compared to $\delta,a$ and $F$.
%and then we pick $\alpha=\alpha(H,F,\varepsilon)$
We will apply Theorem \ref{thm1} such a way that every vertex of $A_i\setminus B$ is adjacent to  at most $\varepsilon n$ vertices in $A_i$.
%all but at most $\alpha n$  vertices in $A_j$ with $j\neq i$ and to at most $\beta n$ vertices in $A_i$, and every vertex of $B$ is adjacent to at least $\gamma n$ vertices in each part. akkor $\gamma$-hoz képest kicsi a másik kettő...

Let  $G$ be an $n$-vertex $F$-free graph with $\cN(H,G)=\ex(n,H,F)$. We apply Theorem \ref{thm1} to obtain vertex sets $A_1,\dots,A_r$ such that $A_1$ is one of the smallest of these sets. It is easy to see that each part has order $\frac{n}{r}+o(n)$. Indeed, otherwise we lose $\Omega(n^{|V(H)|})$ copies of $H$ compared to the Tur\'an graph, and the $o(n^2)$ extra edges inside the parts create $o(n^{|V(H)|})$ copies of $H$. We use this in the form that for each $i$, we have $\frac{n}{r}-\varepsilon n\le |A_i|\le \frac{n}{r}+\varepsilon n$. Let $t$ be the order of the second smallest part of $F$ in an $(r+1)$-coloring of $F$, then $A_i\setminus B$ is $K_{s,t}$-free.

Consider now the different types of copies of $H$. There are at most $\cN(H,T(n,r))$ copies without any edge inside any $A_i$, $O(n^{ra-2})$ copies containing at least two vertices from $B$ and $O(n^{ra-1})$ copies containing one vertex from $B$. 

The remaining copies of $H$ each contain an edge inside some $A_i\setminus B$. Observe that if a copy of $H$ does not contain an edge inside  $A_j\setminus B$, then it contains at most $a$ vertices in  $A_j\setminus B$. Therefore, there is an $i$ such that $H$ contains an edge and at least $a$ vertices in  $A_i\setminus B$.
Observe that $H$ intersects $A_i\setminus B$ in a complete multipartite graph $H'$ on, say, $m\ge a$ vertices. In particular, $H'$ contains a complete bipartite graph $K_{b,m-b}$ with $b\le m-b$ and $b\le a$. On the other hand, $A_i\setminus B$ is $K_{s,t}$-free and as $a\ge t$, we must have $b<s$. Then we have $\ex(n,K_{b,m-b},K_{a,a})=O(n^b)=O(n^{m-1})$ by a result from \cite{gmv}. This implies that for each non-empty subgraph of $H$, there are $O(n^{ra-1})$ copies of $H$ that intersects $A_i\setminus B$ in that subgraph, thus there are $O(n^{ra-1})$ copies of $H$ that contain an edge inside $A_i\setminus B$.

We obtained that $\ex(n,H,F)=\cN(H,T(n,r))+\Theta(n^{ra-1})$. To improve this bound, we will further divide some of the above types of copies of $H$. Let $x$ denote the number of copies of $H$ that intersect $A_i$ in a complete multipartite graph $H'$ on $m\ge a$ vertices with one part being a singleton vertex $v\in A_i\setminus B$. Again, we can just forget about the additional edges and consider the induced subgraph of $H'$ that is $K_{1,m-1}$. By Lemma \ref{kiso}, we have that $x=o(n^{|V(H)|-1})$.
%We call the vertex in the smaller part the \textit{center} of the star $K_{1,m-1}$.

Let $y$ denote the number of copies of $H$ that intersect $A_i$ in a complete multipartite graph $H'$ on $m\ge a$ vertices with one part being a singleton vertex $v\in A_i\cap B$, such that $H'\neq K_{1,a}$ and $H'\neq K_{1,a-1}$. We claim that $y=o(n^{ar-1})$. This statement is obvious if $H'$ contains another vertex of $B$ or an edge inside an $A_j$, since then we can pick $H'$ by picking $v$ and the described vertex or edge, and then any other vertex $n$ ways, to show that there are $o(n^{|V(H')|-1})$ such copies of $H'$. Otherwise, each part of $H'$ without $v$ is inside a part $A_j$, thus the intersection of $H'$ with $A_j\setminus B$ is an independent set on $a$ or $a-1$ vertices. Moreover, the intersection can have $a-1$ vertices for at most one $j$ (since only the center of $K_{1,a}$ can extend such a set to a part of $H$). Therefore, we must have $m\le a+1$, hence $H'=K_{1,a}$ or $K_{1,a-1}$, a contradiction. 

%Assume first that $|B|<s-1$. We will compare the number of copies of $H$ in $G$ to the graph we obtain by deleting the edges inside each $A_i\setminus B$, then adding all the edges between parts and all the edges incident to $B$. It is easy to see that we lose $o(n^{ar-1})$ copies of $H$ this way. Observe that the resulting graph is $K_{s-2}+T$ for some complete $r$-partite graph $T$, thus we have $\cN(H,G)\le \cN(H,K_{s-2}+T)+o(n^{ar-1})$. On the other hand, $\cN(H,K_{s-2}+T)\le \cN(H,K_{s-2}+T(n-s+2,r))\le \cN(H,K_{s-1}+T(n-s+1,r))-\Theta(n^{ar-1})$, thus $\cN(H,G)<\cN(H,K_{s-1}+T(n-s+1,r))$, a contradiction.

Let $v_1,\dots,v_{|B|}$ be the vertices of $B$ in decreasing order of their degrees. 
%Let $0<\delta<1/2r|V(F)|$ be a constant depending on $F$ but not on $a$.

\begin{clm}
We have that $|B|\ge s-1$. Moreover, for $j\le s-1$, $v_j$ has at least $(1-\delta)|A_i|$ neighbors in each $A_i$.
\end{clm}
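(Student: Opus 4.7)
The plan is to leverage the extremality of $G$, i.e., $\cN(H,G)\ge\cN(H,K_{s-1}+T(n-s+1,r))$, by comparing the $\Theta(n^{ra-1})$ corrections to $\cN(H,T(n,r))$ on the two sides. I first observe that for $K_{s-1}+T(n-s+1,r)$ the main correction comes from copies of $H$ using exactly one universal vertex; since each such vertex can be placed in any of the $r$ parts of $H$, each contributes the same $c^*n^{ra-1}(1+o(1))$ for an explicit $c^*>0$, giving a total of $(s-1)c^*n^{ra-1}+o(n^{ra-1})$. For $G$, combining the bounds on $x$ and $y$ from the preceding paragraphs with the type analysis, the correction is $\sum_{v\in B}C(v)+o(n^{ra-1})$, where $C(v)$ counts copies of $H$ whose intersection with $A_i$ (the part containing $v$) forms a $K_{1,a}$ with $v$ as the singleton, i.e., $v$'s part of $H$ is ``misplaced'' to some $A_m\ne A_i$. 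Writing $d_\ell:=|N(v)\cap A_\ell|$, a direct count yields
\[
C(v)\;\propto\;\binom{d_i}{a}\sum_{m\ne i}\binom{|A_m|}{a-1}\prod_{\ell\notin\{i,m\}}\binom{d_\ell}{a},
\]
which is maximized precisely when $d_\ell=|A_\ell|$ for every $\ell$, attaining $c^*n^{ra-1}(1+o(1))$.

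For the first part of the claim, each $v\in B$ has $C(v)\le c^*n^{ra-1}(1+o(1))$, while extremality forces $\sum_{v\in B}C(v)\ge(s-1)c^*n^{ra-1}(1+o(1))$; hence $|B|\ge s-1$.

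For the second part, I would argue by contradiction. Suppose some $v_j$ with $j\le s-1$ has $d_{j^*}<(1-\delta)|A_{j^*}|$ for some $j^*$. The displayed formula then forces $C(v_j)\le(1-\eta)c^*n^{ra-1}$ for an absolute $\eta=\eta(\delta,a,r)>0$, because $a$ being large compared to $1/\delta$ makes $(1-\delta)^a$ small. Since $v_j$ lies in the top $s-1$ of $B$ by degree, every $v_k$ later in the ordering has total degree at most that of $v_j$, itself at most $n-\delta|A_{j^*}|$; by pigeonhole some coordinate of $v_k$ is also bounded below $|A_\ell|$, producing a matching deficit in $C(v_k)$. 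Summing then yields $\sum_{v\in B}C(v)<(s-1)c^*n^{ra-1}$, contradicting extremality.

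The main obstacle is quantifying the slack $\eta$ uniformly when only a single coordinate $d_{j^*}$ falls short. The case $r=2$ is particularly delicate because the displayed formula for $C(v)$ does not involve $d_\ell$ for $\ell\ne i$: there I would track separately the $\Omega(n^{ra-1})$ deficit of Turán-baseline copies caused by $v_j$'s missing $A_i$--$A_{j^*}$ edges, or appeal to Proposition \ref{trivv} applied to common neighborhoods (a concentration of $s$ ``fully connected'' $B$-vertices in one $A_\ell$ would produce a $K_{s,t}$ inside some $A_{\ell'}$ and hence $F\subseteq G$, ruling out such excess).
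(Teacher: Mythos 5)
Your overall framework --- writing the $\Theta(n^{ra-1})$ correction term as $\sum_{v\in B}C(v)$ plus negligible contributions and comparing it with the $s-1$ universal vertices of $K_{s-1}+T(n-s+1,r)$ --- is the same as the paper's (whose $z(v)$ also includes the $K_{1,a-1}$-intersections, a minor difference), and your derivation of $|B|\ge s-1$ is essentially the paper's. The genuine gap is in the ``moreover'' part: your summation does not close when $|B|>s-1$, which is all that Theorem \ref{thm1} guarantees ($|B|$ may be as large as $rK(s-1)$). Knowing that $v_j,v_{j+1},\dots,v_{|B|}$ each lose a factor $1-\eta$ is worthless unless $\eta$ is very close to $1$, and, as you yourself observe, a deficit in a single coordinate $\ell\ne i$ need not reduce $C(v)$ by more than a factor $r-1$ (and not at all when $r=2$), because the $(a-1)$-vertex companion set of $v$'s part of $H$ can be placed in the deficient $A_\ell$ with no adjacency requirement to $v$. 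With $\eta$ bounded away from $1$, the resulting bound $\bigl((j-1)+(|B|-j+1)(1-\eta)\bigr)c^*n^{ra-1}$ can easily exceed $(s-1)c^*n^{ra-1}$, so no contradiction is reached.

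The missing ingredient is the step the paper puts first and which carries the whole claim: a bound on the \emph{total} degrees of the top $s$ vertices of $B$ coming from $F$-freeness. If $\sum_{i=1}^s d(v_i)>sn-|A_1|+\varepsilon n/|V(F)|$, then $v_1,\dots,v_s$ have $\Omega(n)$ common neighbors in every part, and one embeds $F$ greedily: the color class of size $s=\sigma(F)$ goes onto $v_1,\dots,v_s$ \emph{wherever they sit} (your closing remark wrongly requires these $s$ vertices to lie in a single part before invoking Proposition \ref{trivv}), and the remaining classes go into the common neighborhoods inside distinct parts, exactly as in Proposition \ref{trivv}. Consequently $d(v_j)\le n-|A_1|/s+O(\varepsilon n)$ for every $j\ge s$, i.e., each such vertex misses a constant fraction of some part; taking $a$ large then forces $C(v_j)\le C_{\max}/(rK(s-1))\le C_{\max}/|B|$, so the entire tail $v_s,\dots,v_{|B|}$ contributes at most one unit $C_{\max}$ in total rather than $|B|-s+1$ units. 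Only with this control does a $\delta$-deficit of a single $v_j$ with $j\le s-1$ yield $\sum_{v\in B}C(v)<(s-1)(1-o(1))C_{\max}$ and the desired contradiction. You should import this degree-sum argument as the first step of the proof; it is also the natural source of the quantitative control you were searching for in the $r=2$ case.
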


\begin{proof}[Proof of Claim]
We first show that $\sum_{i=1}^s d(v_i)\le sn-|A_1|+\varepsilon  n/|V(F)|$. Indeed, otherwise these $s$ vertices have at least $\varepsilon n/|V(F)|$ common neighbors in each $A_i$. Then we can embed $F$ into $G$ greedily as in Proposition \ref{trivv}. We embed first a color class of $F$ of order $s$ into $v_1,\dots,v_s$ and then we go through the sets $A_i$ one by one to embed the other color classes of $F$. Each time we pick the appropriate number of vertices among the common neighborhood of the already embedded vertices. There are $\varepsilon n/|V(F)|$ common neighbors of $v_1,\dots,v_s$ in $A_i$. The other less than $|V(F)|$ vertices already picked each avoid at most $\varepsilon n/|V(F)|$ neighbors, thus we have at least $\varepsilon n/|V(F)|$ common neighbors. This way we find a copy of $F$ in $G$, a contradiction. This implies that vertices $v_j$ with $j\ge s$ have degree at most $n-|A_1|/s+\varepsilon n/s|V(F)|$, thus they have at most $|A_i|-|A_i|/sr+2\varepsilon n$ neighbors in their part $A_i$. 

Let $z(v)$ denote the number of copies of $H$ that intersect $A_i$ in a $K_{1,a}$ or $K_{1,a-1}$ where $v\in B$ is the center.

By the above part of the proof, there are $o(n^{ar-1})$ other copies of $H$ using at least one edge inside parts. Let $v$ be of degree $n-1$ in $G$, then 
\[z(v)=(1+o(1))\left(\binom{|A_i|-1}{a}\cN(H_0,T(n-|A_i|,r-1))+\binom{|A_i|-1}{a-1}\cN(H_1,T(n-|A_i|,r-1))\right),\] 
where $H_0$ is the complete $(r-1)$-partite graph $K_{a-1,a,\dots,a}$ and $H_1$ is the complete $r$-partite graph $K_{1,a-1,a,\dots,a}$. Let $v'\in B$ a vertex with at most $(1-\delta)|A_i|$ neighbors in $A_i$, then $z(v')\le (1+o(1))(\binom{(1-\delta)|A_i|}{a}\cN(H_0,T((r-1)n/r,r-1))+\binom{(1-\delta)|A_i|}{a-1}\cN(H_1,T((r-1)n/r,r-1))$. If $a$ is large enough compared to $\delta$, then $z(v)/z(v')$ can be arbitrarily large. In particular, if $v'=v_j$ with $j\ge s$, then $z(v)/z(v')\ge rK(s-1)\ge |B|$.

This implies that if $|B|<s-1$, then we have at most $\cN(H,T(n,r))+(s-2)z(v)+|B| z(v')+o(n^{ar-1})<\cN(H,K_{s-1}+T(n-s+1,r))$ copies of $H$, a contradiction. Thus we have
$|B|\ge s-1$. For $j\le s-1$, we have that $v_j$ has more than $(1-\delta)|A_i|$ neighbors in $A_i$, otherwise $\sum_{\ell=1}^{|B|} z(v_\ell)\le (s-2)z(v)+|B|z(v')<(1+o(1))(s-1)z(v)$.
\end{proof}

The above claim implies that there is no $K_{s,t}$ in $B\cup A_i$ using Proposition \ref{trivv}. Moreover, if $|B\cap A_i|=s-s_i$, then there is no $K_{s_i,t}$ in $A_i$ by the same reasoning.

Let $B'=\{v_1,\dots,v_{s-1}\}$.
Let us delete now the edges inside each $A_i\setminus B'$ and add all the edges between parts and all the edges incident to $B'$ to obtain the graph $G'$
, which is isomorphic to $K_{s-1}+T$ for some complete $r$-partite graph $T$.
%(n-s+1,r)$. 
Let $\cH_0$ denote the set of copies of $H$ in $G$ that contain only edges between parts. Let $\cH_1$ be the set of copies of $H$ in $G$ that intersect an $A_i$ in a star $K_{1,a}$ and contain only edges between parts otherwise. Let $\cH_2$ be the set of copies of $H$ in $G$ that intersect an $A_i$ in a $K_{2,a}$ or $K_{1,1,a}$ or $K_{1,1,a-1}$ or intersect both $A_i$ and $A_j$ in $K_{1,a}$ or $K_{1,a-1}$ for some $i\neq j$ such that the 1-element parts and the 2-element part is in $B$,
and the copy of $H$ contains only edges between parts otherwise. Let $\cH_3$ be the set of other copies of $H$ in $G$. Let $\cH'_i$ denote the set of copies of $H$ in $G'$ defined analogously to $\cH_i$.

Clearly we have $|\cH_0|\le |\cH'_0|$.
By Theorem \ref{gepa}, the number of copies of $K_{1,a}$ in $G$ is at most $\cN(K_{1,a},\overline{K}_{s_i-1,|A_i\cup B|-s_i+1})$. This clearly implies that the number of such copies of $K_{1,a}$ does not decrease when we change $G$ to $G'$. Observe that for each such $K_{1,a}$, the remaining part of $H$ contains only edges between parts and we added all the missing such edges when created $G'$, thus the number of ways to extend such stars also does not decrease, showing that $|\cH_1|<|\cH_1'|$.
%, where $\cH_1'$ denotes the set of copies of $H$ in $G'$ that intersect an $A_i$ in a star $K_{1,a}$.
We also have $|\cH_2|<|\cH_2'|$
%, where $\cH_2'$ denotes the set of copies of $H$ in $G'$ defined analogously to $\cH_2$. Indeed, 
since we can pick the vertices in $B$ (the same in $G$ and $G'$) and then the remaining vertices can be picked from their neighborhoods in $A_i$ or $A_j$. We did not delete any edges incident to vertices of $B$, thus the number of ways to pick such vertices does not decrease.
%ja de mi van $K_{1,1,a}$-val?... használjuk hogy $B$-beli az egyik, és csak a maradékban kell a többi

%Let us consider now $\cH_3$. talán ide claim hogy $|\cH_3|\le...$ 
\begin{clm}
$|\cH_3|\le \varepsilon c n^{ar-2}$ for some constant $c=c(a,r)$. 
\end{clm}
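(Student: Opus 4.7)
The plan is to bound $|\cH_3|$ by a two-case analysis. Copies using at least three vertices from $B$ are immediately negligible since $|B|$ is a fixed constant, and copies using at most two vertices from $B$ will be shown to exhibit two independent sources of savings — one factor of $\varepsilon$ from Lemma~\ref{kiso} and one factor of $1/n$ from an additional structural constraint — so that the naive count of $n^{ar}$ drops to $\varepsilon c n^{ar-2}$.

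For Step 1, the number of copies of $H$ using at least three vertices of $B$ is at most $\binom{|B|}{3}n^{ar-3}=O(n^{ar-3})$, which is bounded by $\varepsilon c n^{ar-2}$ for $n$ sufficiently large, since $|B|\le rK(\sigma(F)-1)$ is a constant depending only on $F$ by Theorem~\ref{thm1}.

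For Step 2, take a copy $H^*\in\cH_3$ using at most two vertices of $B$. Since $H^*\notin\cH_0\cup\cH_1\cup\cH_2$, a case analysis of the tuple $(H_1,\ldots,H_r)$ of intersections $V(H^*)\cap A_i$ (each a complete multipartite graph, being an induced subgraph of $H$) shows that $H^*$ must contain a vertex $u\in A_i\setminus B$ acting as the centre of a $K_{1,q}$ in $G[A_i]$ — either because some part $A_i$ contains an edge of $H^*$ with both endpoints in $A_i\setminus B$, or because $H^*$ realises one of the patterns of $\cH_1\cup\cH_2$ but with a "centre" vertex misplaced in $A_i\setminus B$ instead of $B$. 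Using the $K_{s,t}$-freeness of $G[A_i\cup B]$ (established after the preceding Claim via Proposition~\ref{trivv}) and the fact that every vertex of $A_i\setminus B$ has degree at most $\varepsilon n$ inside $A_i$, Lemma~\ref{kiso} applied to $G[A_i]$ with $U\supseteq A_i\setminus B$ bounds the number of such stars by $O(\varepsilon |A_i|^{q})$. Since $H^*\notin\cH_1$, the remainder of $H^*$ must contain a second irregularity — a further non-trivial $H_j$ or another misplaced centre — which forces at least one more vertex of the extension to be chosen from a set of size $O(1)$ rather than $\Omega(n)$, cutting the extension count by a factor of $n$. Combining these, each structural subcase contributes $O(\varepsilon n^{ar-2})$ copies, and summing over the $O(1)$ possible subcases yields $|\cH_3|\le \varepsilon c n^{ar-2}$ for a suitable $c=c(a,r)$.

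The main obstacle will be Step 2: verifying that the second irregularity genuinely forces the extra factor of $n$ for every possible "bad" shape of $(H_1,\ldots,H_r)$. This amounts to careful bookkeeping based on the combinatorial necessity of distributing the $r$ equal colour classes of $H=K_{a,\ldots,a}$ across the $r$ parts $A_j$: whenever the placement deviates from a near-bijective one (as it must for copies in $\cH_3$), the deviation either uses an additional $B$-vertex (contributing only $O(1)$ choices instead of $\Theta(n)$) or creates another internal edge in some $A_j$ (again captured by Lemma~\ref{kiso}), each providing the required extra saving.
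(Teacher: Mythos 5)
Your overall skeleton --- discard the copies with at least three $B$-vertices, and for the remaining copies of $\cH_3$ extract a factor $\varepsilon n^{-1}$ from Lemma~\ref{kiso} applied to a low-degree star centre in some $A_i\setminus B$, plus one further factor $n^{-1}$ from an additional structural constraint --- is essentially the paper's strategy, and your Step~1 and the existence of such a centre (after the colour-class bookkeeping you allude to) can be made to work. The gap is in the two mechanisms you offer for the second factor of $n^{-1}$: ``an additional $B$-vertex'' or ``another internal edge in some $A_j$, again captured by Lemma~\ref{kiso}.'' These do not cover the copies whose internal edges all lie in a single part $A_i$, intersecting it in a complete multipartite graph containing $K_{p,a}$ with $2\le p\le s-1$ and with \emph{two or more} vertices of the small side lying in $A_i\setminus B$. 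For $\sigma(F)\ge 3$ this configuration is realizable and belongs to $\cH_3$ (it is excluded from $\cH_2$ precisely because the small side is not contained in $B$). Here there is no spare $B$-vertex and no second part with an internal edge, and applying Lemma~\ref{kiso} to the $K_{p,a}$ itself only yields $\varepsilon n^{p+a-1}$: the lemma saves a single factor of $n$ no matter how large $p$ is, since its proof finishes by choosing the remaining $p-1$ small-side vertices in $n^{p-1}$ ways. Multiplying by $n^{ar-a-p}$ for the extension gives $\varepsilon n^{ar-1}$, one factor of $n$ short of the claim.

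The missing ingredient is a second, different use of the $K_{s,t}$-freeness of $A_i\cup B$: once the low-degree centre and its $a\ge t$ neighbours inside $A_i$ are fixed (at most $\varepsilon n^{a}$ choices by Lemma~\ref{kiso} with $p=1$), every further small-side vertex must be a common neighbour of those $t$ vertices and therefore has only $O(1)$ choices; equivalently one can invoke the bound $\ex(n,K_{b,m-b},K_{a,a})=O(n^{b})$ from \cite{gmv} or the moreover-type counts behind Theorem~\ref{gepa}, as the paper does. This yields $\varepsilon n^{a}\cdot O(1)^{p-1}\cdot n^{ar-a-p}=O(\varepsilon n^{ar-2})$. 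The same repair is needed in your two-part case whenever some $H_j$ has a small side of size at least two. So the plan is salvageable, but the dichotomy as stated is incomplete, and the step you yourself flag as the main obstacle genuinely fails without this extra common-neighbourhood argument.
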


\begin{proof}[Proof of Claim]
The copies of $H$ in $\cH_3$ that contain edges from only one part $A_i$ have the intersection containing $K_{p,a}$ for some $p\le s$. As we deal with $\cH_3$ now, we have that $p\ge 3$.  Clearly there are $O(n^{ar-3})$ such copies of $H$ where each vertex in the part of order $p$ is in $B$. There are at most $\varepsilon n^{p+a-2}$ copies of $K_{p,a}$ inside $A_i$ with a center outside $B$ by Lemma \ref{kiso}. This is multiplied by at most $2^{2a+2p}$ for picking the intersection (a graph containing $K_{p,a})$, by $s$ for picking $p$, by $r$ for picking $i$, and by $n^{ar-a-p}$ for picking the other vertices, thus there are at most $\varepsilon rs n^{ar-2}2^{2a+2p}$ such copies of $H$.

Consider now the copies of $H$ in $\cH_3$ that contain edges inside exactly two parts $A_{i}$ and $A_{j}$. Let $H_i$ denote the intersection of $H$ with $A_i$ and $H_j$ denote the intersection of $H$ with $A_j$. Recall that there are at most $a$ vertices of $H$ in each other part, thus there are at least $2a$ vertices of $H$ in $A_{i} \cup A_j$. As $A_{i}$ and $A_j$ avoid $K_{s,t}$, each of them contains at most $(r-1)(s-1)+a$ vertices of any $H$. Therefore, each of them contains at least $a-(r-1)(s-1)\ge 2t+1$ vertices of $H$. In particular, both $H_i$ and $H_j$ contain a complete bipartite graph with larger part of order more than $t$ (thus smaller part less than $s$). By Theorem \ref{gepa}, there are $O(n^{|V(H_i)|-1})$ copies of $H_i$ inside $A_i$ and $O(n^{|V(H_j)|-1})$ copies of $H_j$ inside $A_j$.

If $H_i$ or $H_j$ contains a smaller part of order more than 1, or contains more than two parts, then there are $O(n^{|V(H_i)|-2})$ copies of $H_i$ inside $A_i$ (or the same holds for $H_j$), thus there are $O(n^{ar-3})$ such copies of $H$.

If $H_i$ and $H_j$ are both stars and the center of $H_i$ is in $A_i\setminus B$, then Lemma \ref{kiso} shows that there are at most $\varepsilon(n^{|V(H_i)|-1})$ copies of $H_i$ in $A_i$. The analogous statement holds for $H_j$, this gives at most $2r(r-1)a^2$ copies of $H$, where we get additional factors for choosing $i$, $j$, which one or both contain a center outside $B$, and how many leaves they contain.

If $H_i$ and $H_j$ are both stars with center in $B$, at least one of them has at most $a-2$ leaves because this copy of $H$ is not in $\cH_2$. Recall that this copy of $H$ contains $r-2$ independent sets in addition to $H_1$ and $H_2$. This is clearly impossible.

Finally, there are at most $O(n^{ar-3})$ copies of $H$ that contain edges in at least three parts.
\end{proof}

Let us return to the proof of the theorem.
Assume that $G[A_i\cup B]$ is not isomorphic to $\overline{K}_{s-1,|A_i\cup B|-s+1}$. Recall that we have shown $|\cH_1|\le |\cH_1'|$. Now we use the moreover part of Theorem \ref{gepa}. It gives that the number of copies of $K_{1,a}$ is at most $\cN(K_{1,a},\overline{K}_{s-1,|A_i\cup B|-s+1})-c_0 |A_i\cup B|^{a-1}$ for some $c_0$ that depends on $a$, $s$ and $t$. This implies that $|\cH_1|\le |\cH_1'|-c_1n^{ar-2}$ for some $c_1$ that depends on $a$, $s$, $t$ and $r$. As $|\cH_0|\le |\cH'_0|$, $|\cH_2|\le |\cH_2'|$ and $|\cH_3| \le c'\varepsilon n^{ar-2}$, we obtain that there are less than $|\cH'_0|+|\cH_1'|+|\cH_2'|$ copies of $H$ in $G$, a contradiction. 

We obtained that each vertex of $B'$ has degree $n-1$. Now assume that a vertex $z\not\in B'$ has at least $t$ neighbors in its part $A_i$ in $G$. Then with the vertices of $B'$ and its neighbors in $A_i$ they form a $K_{s,t}$ inside $A_i$, thus we can use Proposition \ref{trivv} to find an $F$ in $G$, a contradiction.

Assume now that a copy of $H$ in $G$ contains an edge not in $G'$, i.e., an edge inside a part that does not contain a vertex from $B'$. Clearly, $H$ contains at most $a$ vertices from each $A_i\setminus B$ if $H$ does not contain an edge inside $A_i$. Therefore, $H$ contains at least $a$ vertices of $A_j\cup B$ for some $j$ such that $H$ contains an edge inside $A_j\setminus B$. It implies that $H$ contains at least $a-s>r(t+s)$ vertices of $A_j$. Recall that $H$ intersects $A_j\setminus B$ in a complete $r'$-partite graph $H'$ with $2\le r'\le r$. Then there is a part of $H'$ of order more than $t+s$ in it, thus vertices in the other parts of $H'$ each have more than $t$ neighbors inside their part $A_j$, a contradiction.

We obtained that $G$ is contained in $G'$ which is of the form $K_{s-1}+T$ for some complete $r$-partite graph on $n-s+1$ vertices, and the other edges do not create further copies of $H$. It is left to show that $T$ is the Tur\'an graph.
Assume not, i.e., without loss of generality $|A_2\setminus B'|>|A_1\setminus B'|$. Then we move 
%a vertex $v$ 
$\lfloor(|A_2|-|A_1|)/2\rfloor$ vertices
from $A_2$ to $A_1$ in $G'$, to obtain $G''$. It is well-known and easy to see that the number of edges increases this way. We will use a result of Ma and Qiu \cite{mq} that determines when the $m$-vertex complete bipartite graph with the most copies of $K_{p,q}$ is $T(m,2)$ (an equivalent statement was proved by Brown and Sidorenko \cite{brosid} in a different setting earlier). The result implies that for $a$ large enough, this holds for $K_{a,a}$, $K_{a-1,a}$, $K_{a-1,a-1}$ and $K_{a-2,a}$. For $K_{a,a}$ and $K_{a-1,a}$, this result also appears in a paper \cite{gypl} by Gy\H ori, Pach and Simonovits. 

We claim that the number of copies of $K_{a,a}$ inside $(A_1\cup A_2)\setminus B'$ increases by $\Omega(n^{2a-2})$. Indeed, for each edge, we take $K_{a-1,a-1}$ from the remaining part of $(A_1\cup A_2)\setminus B'$. Let $x$ (resp. $x'$) denote the number of copies of $K_{a-1,a-1}$ inside the remaining part of $(A_1\cup A_2)\setminus B'$ in $G'$ (resp $G''$). Clearly we have $x'\ge x=\Omega(n^{2a-2})$. As each $K_{a,a,}$ is counted $a^2$ ways as the number of edges increases by at least 1, we proved our claim.

%, thus we have more ways to extend edges to $K_{a,a}$ in $G''$, using that $K_{a,a}$ is $K_r$-Tur\'an-good. As the number of ways to extend edges to $K_{a,a}$ is $\Omega(n^{2a-2})$, we proved our claim.

We claim that for the following complete $r$-partite graphs: $K=K_{a-1,a,a,\dots, a}$, $K'=K_{a-2,a,a,\dots, a}$ and $K''=K_{a-1,a-1,a,\dots, a}$, the number of copies of them avoiding $B'$ does not decrease. Observe that each copy of $K$, $K'$ and $K''$ intersects $A_1\cup A_2\setminus B'$ in one of $K_{a,a}$, $K_{a-1,a}$, $K_{a-1,a-1}$ and $K_{a-2,a}$. The number of copies of those bipartite graphs does not decrease, and the number of ways to extend them does not change, proving our claim.

The number of copies of $H$ containing exactly one vertex from $B'$ is $(s-1)\cN(K,T)$, and the number of copies of $H$ containing exactly two vertices from $B'$ is $\binom{s-1}{2}(\cN(K',T)+\cN(K'',T))$, thus does not decrease. The number of copies of $H$ containing at least three vertices from $B'$ is $O(n^{ra-3})$, thus $\cN(H,G'')>\cN(H,G')=\cN(H,G)$, a contradiction completing the proof.
\end{proof}

%If $F$ has chromatic number $r+1>2$ and $\sigma(F)=s$, then $K_{s-1}+T(n,r)$, $H$ is the complete $r$-partite $K_{a,\dots,a}$ with $a$ sufficiently large. From the structure result in the stability II paper, we know that we have parts $A_1,\dots,A_r$, at most $K$ vertices, $U$ are connected to lin in each part and all the other vertices are connected to sublin in their part and all but sublin in other parts. Consider the $s$ vertices of largest degree in $U$. Their total degree is at most $sn-|A_1|$, thus average degree at most $m=n-n/rs$. In particular, at least $K-s+1$ vertices of $U$ have degree at most $m$. The number of copies of $H$ containing them is about $(m/r)^{ra-1}$, so the ratio of those compared to universal vertices can be any low. Problem: $K$ depends on $a$. Maybe fix this in the stab II paper? igen!

%If $F$ is bipartite, with $s>1$ in one part in each bipartition, then $\overline{K}_{s-1,n-s+1}$, $H=K_{s-1,a}$ with $a$ large. or even stars?  using compl bip paper with Patkós: $H=K_{a,b}$ with $a<s$.

%If $F$ is a star, then Chase, Cambie et al and other trivialities

\section{Concluding remarks}

We have shown for any $F$ an extremal graph $G$ in a generalized Tur\'an problem $\ex(n,H,F)$. For any other graphs $H'$, $G$ can be the first candidate to be an extremal graph. However, we show that for each $F\neq K_{1,2}$ there are more than one extremal graphs. 
%For several graphs $H'$ we can immediately see that $G$ is not extremal. 

If $F$ has chromatic number $r+1>2$, one possibility is that for some less balanced complete $r$-partite graph $T$ on $n-s+1$ vertices, $K_{s-1}+T$ contains more copies of $H'$ than $K_{s-1}+T(n-s+1,r)$. It is the case if $H'$ itself is not balanced, e.g., $H=K_{1,a}$ with $a$ large enough. However, $K_{s-1}+T$ is not necessarily extremal either. The author \cite{gerb3} showed a graph $H'$ for every $F$ where complete $r$-partite graphs are not even asymptotically optimal. As $\cN(H',T)=(1+o(1))\cN(H,K_{s-1}+T)$, we obtain that $K_{s-1}+T$ is not extremal for $H'$. If $\cD(F)$ does not contain a forest, then we can add superlinear many edges to a part of the Tur\'an graph without creating $F$, thus $K_{s-1}+T$ is not even extremal for $\ex(n,F)$ (or for any $\ex(n,H',F)$ if $\chi(H')<r$). Finally, $K_{s-1}+T$ does not contain $K_{s+r}$, thus not an extremal graph for $\ex(n,K_{s+r},F)$ (observe that $K_{s+r}$ does not contain $F$, since $F$ has at least $sr$ vertices).

If $F$ is bipartite but not a star, similarly we have that $K_{s-1}+T(n-s+1,1)$ does not contain $K_{s+1}$, which does not contain $F$. Finally, for stars we gave two constructions that are different unless $F=K_{1,2}$, in which case every $F$-free graph is a matching, thus the only extremal graph is a maximal matching.

\smallskip

Gerbner and Palmer \cite{gerpal} gave the following definition. A graph $H$ is $F$-\textit{Tur\'an-good} if $\ex(n,H,F)=\cN(H,T(n,\chi(F)-1))$ for sufficiently large $n$. It was shown by the author \cite{ger2} that for a given $H$, there is an $F$ such that $H$ is $F$-Tur\'an-good if and pnly if $\sigma(H)=1$. In light of our Theorem \ref{main}, it would have been better to introduce a name for graphs with $\ex(n,H,F)=\cN(H,K_{\sigma(F)-1}+T(n-\sigma(F)+1,\chi(F)-1))$ for sufficiently large $n$.

\smallskip

It is a natural question whether we can construct an extremal graph for any $H$ and some $F$. One can immediately see that it is less useful, as there is no reason to expect it to be an extremal graph for any $F'\neq F$. Nevertheless, we can answer this question using a recent result of Morrison, Nir, Norin, Rza{\.z}ewski and Wesolek \cite{mnnrw}. They (resolving a conjecture from \cite{gerpal}) showed that for any graph $H$, if $r$ is large enough, then $H$ is $K_{r+1}$-Tur\'an-good. This gives infinitely many extremal graphs for any $H$.

%leírni hagy aszimpt extr persze tudunk...

\bigskip

\textbf{Funding}: Research supported by the National Research, Development and Innovation Office - NKFIH under the grants KH 130371, SNN 129364, FK 132060, and KKP-133819.

\end{document}